\newtheorem{theorem}{Theorem}[section]
\newtheorem{conj}[theorem]{Conjecture}
\theoremstyle{definition}
\theoremstyle{remark}
\numberwithin{equation}{section}
\def \n{\noindent}
\def \mb{\mathbb}
\def \mc{\mathcal}
\def \fk{\mathfrak}
\def \scr{\mathscr}
\def \cplane{\mathbb{C}}              %complex number
\def \integer{\mathbb{Z}}             %integer number
\def \natu{\mb N}                     %natural number
\def \rto{\rightarrow}
\def \hr{\hookrightarrow}
\def \0{\infty}                       %infinity
\def \P{\mb P}                        %Projection
\def \qq{\quad}
\def \1{{\bf 1}}
\def \F{{\scr F}}
\def \M{\overline{\scr M}}
\def \bl{\Big\langle}
\def \br{\Big\rangle}
\def \bb{\Big |}
\def \aut{\text{Aut}}
\def \cN{\mc N}
\def \F{\scr F}
\def \simple{{\text{simple}}}
\def \vir{{\text{vir}}}
\def \x{{\vec x}}
\def \y{{\vec y}}
\def \bF{{\overline{\F}}}
\begin{document}

\title[Relative GW--invariants of projective completions]
{On relative Gromov--Witten invariants of
projective completions of vector bundles}

%    Information for first author
\author{Cheng-Yong Du}
%    Address of record for the research reported here
\address{School of Mathematics, Sichuan Normal University,
Chengdu, China, 610068}
%    Current address
%\curraddr{Department of Mathematics and Statistics,
%Case Western Reserve University, Cleveland, Ohio 43403}

\email{cyd9966@hotmail.com}

%\author{Bohui Chen}
%\address{Department of Mathematics and Yangtze Center of Mathematics, Sichuan %University, 610065, Chengdu, %People's Republic of China.}
%\email{bohui@cs.wisc.edu.}
%    \thanks will become a 1st page footnote.

%\thanks{The author is supported by NSFC (No. 11501393).}

%    General info
\subjclass[2000]{53D45, 14N35}

\date{}%{January 1, 2001 and, in revised form, June 22, 2001.}

%\dedicatory{This paper is dedicated to my advisors.}

\keywords{Projective completion, relative Gromov--Witten invariants, uniqueness, blow-up along complete intersection}

\begin{abstract}
It was proved by Fan--Lee %\cite{FL16}
and Fan %\cite{Fan17}
that the absolute Gromov--Witten invariants of two projective bundles $\P(V_i)\rto X$ are identified canonically when the total Chern classes $c(V_1)=c(V_2)$ for two bundles $V_1$ and $V_2$ over a smooth projective variety $X$. In this note we show that for the two projective completions $\P(V_i\oplus\mc O)$ of $V_i$ and their infinity divisors $\P(V_i)$, the relative Gromov--Witten invariants of $(\P(V_i\oplus\mc O),\P(V_i))$ are identified canonically when $c(V_1)=c(V_2)$.
%prove an analogue for relative Gromov--Witten invariants of the projective completions of vector bundles by combining results of Fan--Li \cite{FL16} and Fan \cite{Fan17}, and the topological view of Gromov--Witten theory of Maulik--Pandharipande \cite{MP06}. We prove this result in the symplectic category.
\end{abstract}

\maketitle

%\tableofcontents
\section{Statement of the main result} \label{sec 1}

Let $V_i\rto X, i=1,2$ be two rank $n$ vector bundles over a smooth projective variety $X$. Denote the total Chern classes of $V_i,i=1,2$ by
\[
c(V_i):=1+c_1(V_1)+\ldots+c_n(V_i).
\]
Suppose that $c(V_1)=c(V_2)$. Let $Y_i:=\P(V_i\oplus\mc O_X)$ be the projective completion (or projectification) of $V_i$. Denote the projection maps by $\pi_i:Y_i\rto X$. By Leray--Hirsch we get
\[
H^*(Y_i)=\frac{H^*(X)[y_i]}
{(y_i^{n+1}+y_i^nc_1(V_i)+\ldots+y_ic_n(V_i))},
\qq i=1,2,
\]
where $y_i=c_1(\mc O_{Y_i}(1))$ is the first Chern class of the dual bundle of the tautological line bundle $\mc O_{Y_i}(-1)$ over $Y_i$.

Because the Chern classes of $V_1$ and $V_2$ agree, we have the following isomorphism
\[
\F_Y: H^*(Y_1)\cong H^*(Y_2),
\]
satisfying $\F_Y(\pi_1^*(\sigma))=\pi_2^*(\sigma), {\F_Y(y_1)=y_2}$. As to their numerical curve classes, there is a unique isomorphism (see also \cite{FL16,Fan17})
\[
\Psi: N_1(Y_1)\cong N_1(Y_2)
\]
determined by the property
\[
\langle D,\beta\rangle_{Y_1}=\langle\F_Y(D),\Psi(\beta)\rangle_{Y_2},
\]
where $D\in H^2(Y_1;\integer)$ and $\langle\cdot,\cdot\rangle_{Y_i}$ are the Poincar\'e {pairings} of $Y_i$.

In fact, we have
\[
N_1(Y_i)=\iota_{i,*}(N_1(X))\oplus \integer[F_i],
\]
where for $i=1,2$, $F_i$ is the fiber class of $\pi:Y_i\rto X$, and $\iota_i:X\rto Y_i$ is the inclusion as the zero section $\P(0\oplus\mc O_X)$ in $Y_i$. Then
\[
\Psi(\iota_{1,*}(\beta)+dF_1)=\iota_{2,*}(\beta)+dF_2.
\]

Similarly for the projectivization $Z_i:=\P(V_i)$, which is the infinite divisor in $Y_i$ via the inclusion
$\iota_i:Z_i=\P(V_i\oplus 0)\hr Y_i$ for $i=1,2$, we have
\[
H^*(Z_i)=\frac{H^*(X)[z_i]}{(z_i^n+z_i^{n-1}c_1(V_i)+\ldots+c_n(V_i))},
\]
where
\[
z_i=c_1(\mc O_{Z_i}(1))=\iota^*_i(c_1(\mc O_{Y_i}(1)))=\iota^*_i(y_i).
\]
We also have
\[
\F_Z:H^*(Z_1)\cong H^*(Z_2)
\]
satisfying $\F_Z(\pi^*_1(\sigma))=\pi_2^*(\sigma)$ and $\F_Z(z_1)=z_2$. Since via the inclusions $\iota_i$ we could identify the fiber classes of $Y_i\rto X$ and $Z_i\rto X$, $\Psi$ restricts to $N_1(Z_1)\cong N_1(Z_2)$. Our main theorem is

\begin{theorem}\label{thm main}
Let $X$ be a smooth projective variety,
and $V_1, V_2$ be two rank $n$ vector bundles over $X$.
Let $Y_1, Y_2$, $\F_Y,\F_Z$ be the same as above.
Suppose $c(V_1)=c(V_2)$.
Then we have the following
equality between relative Gromov--Witten invariants
\begin{align}\label{eq main-thm}
\bl \varpi \bb \mu \br^{Y_1,Z_1}_{g,m,\beta,\vec\mu}=
\bl \F_Y(\varpi) \bb \F_Z(\mu)\br^{Y_2,Z_2}_{g,m,\Psi(\beta),\vec\mu}
\end{align}
with
\[
\varpi=(\tau_{k_1} \alpha_1 ,\ldots,\tau_{k_m}\alpha_m),
\]
\[
\F_Y(\varpi)=(\tau_{k_1} \F_Y(\alpha_1),\ldots,\tau_{k_m}\F_Y(\alpha_m))
\]
for any cohomology classes $\alpha_1,\ldots,\alpha_m\in H^*(Y_1)$
and any set of {nonnegative} numbers $k_1,\ldots,k_m$,
any curve class $\beta\in N_1(Y_1)$, any genus $g\in \natu$
and any $H^*(Z_1)$-weighted partition of $[Z_1]\cdot \beta$
\[
\mu=((\mu_1,\gamma_1),\ldots,(\mu_{\ell(\mu)},\gamma_{\ell(\mu)}))
\]
with $\gamma_i\in H^*(Z_1)$,
$\vec\mu=(\mu_1,\ldots,\mu_{\ell(\mu)})$, and
\[
\F_Z(\mu)=
((\mu_1,\F_Z(\gamma_1)),\ldots,(\mu_{\ell(\mu)},\F_Z(\gamma_{\ell(\mu)}))).
\]
\end{theorem}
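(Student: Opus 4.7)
The plan is to reduce the claimed identity for relative Gromov--Witten invariants of $(Y_i,Z_i)$ to corresponding statements for absolute Gromov--Witten invariants of projective bundles over $X$, where the theorems of Fan--Lee and Fan \cite{FL16,Fan17} apply directly. The key geometric input is the deformation of $Y_i$ to the normal cone of the smooth divisor $Z_i\subset Y_i$, which degenerates $Y_i$ into the normal-crossing union $Y_i\cup_{Z_i}P_i$, where
\[
P_i:=\P(N_{Z_i/Y_i}\oplus\mc O_{Z_i})=\P(\mc O_{Z_i}(1)\oplus\mc O_{Z_i})
\]
is a $\P^1$-bundle over $Z_i$ glued to $Y_i$ along the zero section of $P_i$. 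All three of $Y_i$, $Z_i$ and $P_i$ are iterated projective bundles over $X$ whose defining bundles have total Chern class determined by $c(V_i)$, so their absolute Gromov--Witten theories are preserved under the isomorphisms $\F_Y$, $\F_Z$ and $\Psi$ by Fan--Lee and Fan. The degeneration formula then yields
\[
\bl\varpi\br^{Y_i}_{g,m,\beta}=\sum_{\mu,\Gamma}C_{\mu,\Gamma}\cdot\bl\varpi_1\bb\mu\br^{(Y_i,Z_i)}_{g_1,m_1,\beta_1,\vec\mu}\cdot\bl\varpi_2\bb\mu^\vee\br^{(P_i,Z_{i,\infty})}_{g_2,m_2,\beta_2,\vec\mu},
\]
where $Z_{i,\infty}$ denotes the infinity section of $P_i\to Z_i$, the sum is over admissible splittings $\Gamma$ of marked points, genus, curve class and insertions $\varpi=\varpi_1\sqcup\varpi_2$, the partition $\mu^\vee$ records the Poincar\'e dual cohomology weights at the gluing, and $C_{\mu,\Gamma}$ is the standard combinatorial factor.

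The next step is to show that the $(P_i,Z_{i,\infty})$-factors are themselves determined by $c(V_i)$. Since the fibrewise $\cplane^*$-action on the $\P^1$-bundle $P_i\to Z_i$ fixes $Z_{i,\infty}$ together with the zero section $Z_i$, virtual localization expresses these relative invariants as equivariant rubber integrals on moduli of stable maps to $Z_i$, which reduce in the nonequivariant limit to absolute Gromov--Witten invariants of $Z_i=\P(V_i)$ paired with universal combinatorial factors; these are canonically identified under $\F_Z$ and $\Psi|_{N_1(Z_i)}$ by a second application of \cite{FL16,Fan17}. Combined with the identification of the absolute invariants of $Y_i$ on the left-hand side, the degeneration formula becomes an identity whose only ingredients not yet known to agree between $i=1$ and $i=2$ are the relative invariants of $(Y_i,Z_i)$ themselves.

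The final step, where the principal obstacle lies, is inverting this linear system to extract the individual relative invariants. The plan is to induct on the pair $(\ell(\mu),|\mu|)$ ordered lexicographically: for a fixed tangency profile $\mu$, a judicious choice of the auxiliary data $(\varpi_2,\beta_2)$ on the $P_i$-side isolates the desired $\bl\varpi\bb\mu\br^{(Y_i,Z_i)}$ as the leading term, with every other contribution involving either already-identified quantities or relative invariants $\bl\cdot\bb\mu'\br^{(Y_i,Z_i)}$ with $(\ell(\mu'),|\mu'|)$ strictly smaller, controlled by the inductive hypothesis. The delicate part is producing a suitable auxiliary choice that yields a nonvanishing and canonically identified leading coefficient, and verifying that $\F_Y$, $\F_Z$ and $\Psi$ intertwine the entire combinatorial decomposition --- the weighted partitions together with their Leray--Hirsch weights $\gamma_i$, the curve class splittings $\beta=\beta_1+\beta_2$, and the descendant insertions --- consistently at every stage of the induction.
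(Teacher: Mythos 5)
Your strategy is genuinely different from the paper's. The paper never degenerates $Y_i$: it works with the pair $(Y_i,Z_i)$ directly, applying the Graber--Vakil relative virtual localization for the fibrewise $\cplane^*$-action on $Y_i=\P(V_i\oplus\mc O_X)$. This expresses the relative invariants in terms of (a) twisted Gromov--Witten invariants of $X$ with twisting from $V_i$, controlled by Quantum Riemann--Roch and hence by $c(V_i)$; (b) fiber-class relative invariants, which reduce to $(\P^n,\P^{n-1})$-invariants independent of $V_i$ (Section 2); and (c) rubber invariants of $(W_i,Z_{i,0}\sqcup Z_{i,\infty})$ with $W_i=\P(\mc O_{Z_i}(1)\oplus\mc O_{Z_i})$, which are handled by Maulik--Pandharipande's rigidification and the induction on distinguished type~II invariants, feeding ultimately into Fan's Theorem~A for twisted invariants of $Z_i$. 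Your route --- deform to the normal cone, apply the degeneration formula, identify the $(P_i,Z_{i,\infty})$ side, and invert --- is instead a version of Maulik--Pandharipande's absolute/relative correspondence, and is essentially the route the paper attributes to Fan's proof of the blow-up conjecture in the final subsection. Both are legitimate, but they are not the same proof.

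There are two places where your sketch has real gaps. First, the assertion that relative invariants of $(P_i,Z_{i,\infty})$ ``reduce in the nonequivariant limit to absolute Gromov--Witten invariants of $Z_i$ paired with universal combinatorial factors'' is not correct as stated. Localizing on a $\P^1$-bundle over $Z_i$ produces, on simple fixed loci, \emph{twisted} Gromov--Witten invariants of $Z_i$ with twisting by $\mc O_{Z_i}(\pm 1)$ (these require Quantum Riemann--Roch, not just Leray--Hirsch, to be compared to absolute invariants of $Z_i$), and on composite fixed loci, rubber invariants with $\Psi_\infty$-integrals, which are emphatically not absolute invariants of $Z_i$ and require the full rubber calculus and distinguished type~II induction of \cite{MP06} to be controlled. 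Second, the inversion of the degeneration formula that you defer as ``the delicate part'' is where nearly all of the content of the absolute/relative correspondence lives: establishing the triangular structure, the nonvanishing leading coefficient, and the compatibility of the entire induction with $\F_Y$, $\F_Z$, $\Psi$ is exactly the work that must be done, and it is not easier than what the paper does via direct relative localization. Either carry out that inversion in a visibly $\F$-equivariant way, or cite \cite[Theorem~2]{MP06} (and \cite{HLR08} for descendants) and then explicitly verify that every ingredient of that algorithm --- the normal bundle $N_{Z_i/Y_i}=\mc O_{Z_i}(1)$, the bundle $P_i$, the rubber invariants, the type~II induction --- is intertwined by $\F_Y$, $\F_Z$, $\F_W$, $\Psi$; without that verification, ``determined by'' does not yet give ``canonically identified.''
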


We prove Theorem \ref{thm main} via  virtual localization for relative Gromov-Witten invariants \cite{GV05}, the topological view for Gromov--Witten theory of Maulik--Pandharipande \cite{MP06}, and the results of Fan--Lee \cite{FL16} and Fan \cite{Fan17} on the uniqueness of absolute Gromov--Witten theory of projective bundles.

In proving this theorem we need to consider the rubber invariants of the projective completions of the normal line bundles of $Z_i$ in $Y_i$ for $i=1,2$. The normal line bundles of $Z_i$ in $Y_i$ are $\mc O_{Z_i}(1)$. For simplicity we denote them by $L_i$. The projective completions of $L_i$ are
\[
W_i:=\P(L_i\oplus\mc O_{Z_i}),
\]
with projection $\tilde \pi_i:W_i\rto Z_i$. Their cohomologies are
\[
H^*(W_i)=\frac{H^*(Z_i)[w_i]}{(w_i^2+z_iw_i)},\qq \mbox{for}\qq i=1,2,
\]
with $w_i=c_1(\mc O_{W_i}(1))$. Then we also have an identification $\F_W:H^*(W_1)\cong H^*(W_2)$ satisfying $\F_W(\tilde \pi^*\gamma)=\tilde \pi^*(\F_Z(\gamma))$ for $\gamma\in H^*(Z_1)$ and $\F_W(w_1)=w_2$. Therefore one can see that $\F_W$ is determined by $\F_Z$.

In the following we denote all fibration projections from $Y_i,Z_i,W_i$ to $X$ by $\pi_i$, the fibration projections from $W_i$ to $Z_i$ by $\tilde\pi$. Then $\pi_i=\pi_i\circ\tilde\pi_i:W_i\rto X$. The fiber of $\tilde\pi:W_i\rto Z_i$ is $\P^1$, the fiber of $\pi:W_i\rto X$ is the one point blow-up of $\P^n$, i.e. the projective completion of $\mc O_{\P^{n-1}}(1)$. Denote the fiber class of $\tilde\pi$ by $\tilde F_i$, and the fiber class of $\pi:Z_i\rto X$ and $\pi:Y_i\rto X$ by $F_i$. We could extend $\Psi:N_1(Z_1)\cong N_1(Z_2)$ to
\[
\Psi:N_1(W_1)\cong N_1(W_2),\qq
\Psi(\tilde\iota_{1,*}(\beta)+d[\tilde F_1])
=\tilde\iota_{2,*}({\Psi}(\beta))+d[\tilde F_2],
\]
where $\tilde\iota_i:Z_i\rto W_i$ are the inclusions as zero sections for $i=1,2$.

In the following, when we consider a single vector bundle $V\rto X$, all notations above apply.

%\section{Some basic facts about projective bundles}

%We first review some fact about projective bundles from \cite{Fan17}.

%\begin{lemma}[\cite{Fan17}]\label{lem canonical-divisor}
%We have $(K_{Y_1},\beta)=(K_{Y_2},\Psi(\beta))$.
%\end{lemma}

%Like the absolute case,
%in proving the theorem,
%we are free to twist both $V_1$ and $V_2$ by a fixed line bundle
%simultaneously.
%To be more precise, for any line bundle $L$ on $X$,
%we still have
%\[
%c(V_1 \otimes L\inv)=c(V_2\otimes L\inv)
%\]
%and
%\[
%\big(P((V_i\oplus\mc O_X)\otimes L\inv), \P(V_i\otimes L\inv)\big)
%\cong
%\big(P(V_i\oplus\mc O_X),\P(V_i)\big).
%\]

%We next consider only a single vector bundle $\pi: V\rto X$.
%Denote its projectification by $\pi:Y=\P(V\oplus\mc O_X)\rto X$,
%and the infinite divisor, i.e. the projectivization by $\pi:Z:=\P(V)\rto X$.
%We have
%\begin{lemma}[\cite{Fan17}] \label{lem ampleness-the-bundle}
%For any $g\in\natu$, there is a %sufficiently ample
%line bundle
%$L_g\in\mbox{Pic}(X)$ such that
%for any $\beta\in\mbox{NE}(Y)$ with $\pi_*(\beta)\neq 0$,
%we have the intersection pairing
%\[
%(\beta,\mc O_Y(1)+\pi^*L_g)> \max\{g-1,0\},
%\]
%(where the notation $\mbox{NE}(\cdots)$ stands for the Mori cone,
%i.e. the cone of effective classes).
%\end{lemma}

%In the rest part we always fix such an $L_g$.

\section{Relative Gromov--Witten invariants of fiber class}\label{sec fiber-class-inv}

We first consider fiber class relative Gromov--Witten invariants, i.e. those invariants with homology class being a fiber class of $\pi:Y_i\rto X$.

We consider a single vector bundle $V\rto X$. Let $F$ denote the fiber class of $\pi:Y=\P(V\oplus\mc O_X)\rto X$, $Z$ be the infinite divisor $\P(V)$ of $Y$. Following \cite{MP06}, we see that fiber class relative Gromov--Witten invariants of $(Y,Z)$ reduces to relative Gromov--Witten invariants of $(\P^n,\P^{n-1})$ as follow. Consider a fiber class relative moduli space of $(Y,Z)$
\[
\M_{g,m,dF,\vec\mu}(Y,Z).
\]
There is a fibration structure
\[
\M_{g,m,dF,\vec\mu}(\P^n,\P^{n-1})\hr
\M_{g,m,dF,\vec\mu}(Y,Z)\stackrel{\pi}{\rto} X
\]
whose fiber is the relative moduli space of $(\P^n,\P^{n-1})$. This fibration induces a $\pi$-relative virtual fundamental class $[\M_{g,m,dF,\vec\mu}(Y,Z)]^{\text{vir}_\pi}$ such that
\[
[\M_{g,m,dF,\vec\mu}(Y,Z)]^{\text{vir}}= c(TX\boxtimes \mb E) \cap[\M_{g,m,dF,\vec\mu}(Y,Z)]^{\text{vir}_\pi},
\]
where $\mathbb E$ is the Hodge bundle. We then have
\[\begin{split}
&[\M_{g,m,dF,\vec\mu}(Y,Z)]^{\text{vir}}\\
=&
\sum_q h_q(c_1(\mathbb E),c_2(\mathbb E),\ldots)
t_q (c_1(TX),c_2(TX ),\ldots)
\cap[\M_{g,m,dF,\vec\mu}(Y,Z)]^{\text{vir}_\pi}.
\end{split}\]
Therefore for
\[
\alpha_i=\pi^*(\sigma_i)y^{a_i},\qq
\mu=(\ldots,(\mu_j,\pi^*(\gamma_j) z^{b_j}),\ldots)
\]
with $\sigma_i,\gamma_j\in H^*(X)$, we have
\begin{align*}
\begin{split}
&\bl \tau_{k_1}\alpha_1,\ldots,\tau_{k_m}\alpha_m\bb
\mu\br^{Y,Z}_{g,m,dF,\vec\mu}\\
=&\frac{1}{|\aut(\mu)|}\sum_q\int_X t_q\prod_i\sigma_i\prod_j\gamma_j
\Big(\int_{[\M_{g,m,dF,\vec\mu}(Y,Z)]^{\text{vir}_\pi}}
h_q\prod_i({\psi^{k_i}_i}y^{a_i})\prod_j z^{b_j}\Big)
\end{split}
\end{align*}

\n The integration
\[
\int_{[\M_{g,m,dF,\vec\mu}(Y,Z)]^{\text{vir}_\pi}}
h_q\prod_i({\psi^{k_i}_i}y^{a_i})\prod_j z^{b_j}
\]
are obtained from the Hodge integrals of descendent relative Gromov-Witten invariants of $(\P^n,\P^{n-1})$, which only depends on $\vec\mu$, $(k_1,\ldots,k_m)$, $(a_1,\ldots,a_m)$ and $(b_1,\ldots,b_{\ell(\mu)})$. The transformations $\F_Y$ and $\Psi$ do not change these datum. Therefore Theorem \ref{thm main} holds for  fiber class relative Gromov--Witten invariants of $(Y_i,Z_i)$.

\section{Relative Gromov--Witten invariants with general homology classes}

In this section we consider Gromov--Witten invariants of $(Y_i,Z_i)$ with general homology classes.

\subsection{Relative virtual localization}

We first also consider a single vector bundle $V\rto X$. Then we have $Y=\P(V\oplus\mc O_X)$, $Z=\P(V)$, $W=\P(L\oplus\mc O_Z)$ with $L=\mc O_Z(1)$ being the normal line bundle of $Z$ in $Y$ as in Section \ref{sec 1}. $W$ has {the} zero section $Z_0$ and {the} infinite section $Z_\0$, which are both isomorphic to $Z$. We also use $F$ to denote the fiber class of $\pi:Y\rto X$ and $\pi:Z\rto X$, and $\tilde F$ to denote the fiber class of $\tilde\pi:W\rto Z$.

We consider relative Gromov--Witten invariants with non-fiber homological classes of $(Y,Z)$. After \cite{MP06}, we use virtual localization for relative Gromov--Witten invariants, which was worked out explicitly by Graber and Vakil \cite{GV05}, to reduce relative Gromov--Witten invariants of $(Y,Z)$ into twisted Gromov--Witten invariants of $X$ with twisting coming from $V$ and {\em rubber invariants} of $(W,Z_0\sqcup Z_\0)$. Then via the rubber calculus in \cite[Section 1.5]{MP06}, the rubber invariants are related to fiber class relative Gromov--Witten invariants and {\em distinguished type II invariants} of $(W,Z_0\sqcup Z_\0)$. At last distinguished type II invariants of $(W,Z_0\sqcup Z_\0)$, are determined by an induction algorithm in \cite{MP06} with fiber class relative Gromov--Witten invariants as the initial data.

Take a self dual basis
\[
\Sigma_\star:=\{\sigma_i\}_{1\leq i\leq N}
\]
of $H^*(X)$.

Consider the invariant of $(Y,Z)$
\begin{align}\label{eq rel-inv-considered}
\bl\varpi\bb\mu\br^{Y,Z}_{g,m,\beta,\vec\mu}
\end{align}
with
\[
\varpi=(\tau_{k_1}\alpha_1,\ldots,\tau_{k_m}\alpha_m)
\]
and
\[
\mu=((\mu_1,\gamma_1),\ldots,
(\mu_{\ell(\mu)},\gamma_{\ell(\mu)})).
\]
Denote the corresponding moduli space by $\M_\Gamma$ with $\Gamma$ denoting the topological data
\[
\Gamma:=(g,m,\beta,\vec\mu).
\]
Then $\beta=\pi_*(\beta)+dF$ with $d\geq 0$, $\pi_*(\beta)\in N_1(X)$, and $|\vec\mu|=\sum_i\mu_i=d$.

There is a $\cplane^*$-action on $Y=\P(V\oplus\mc O_X)$ via dilation on the fiber of $V$. This action has two fixed loci:
\begin{itemize}
\item one is $X$ embedded into $Y$ as the zero section,      whose normal bundle is $V\rto X$,
\item the other one is the infinite divisor {$Z=\P(V\oplus 0)=\P(V)$}, whose normal bundle is $\mc O_Z(1)$, denoted by $L$.
\end{itemize}
%The cohomology classes of $H^*(Y)$ and $H^*(Z)$ have natural equivariant liftings (cf. \cite{Fan17}).

Stable maps in $\M_\Gamma$ consists of two types, those mapped to the rigid target $(Y,Z)$, and those mapped to a non-rigid target $(Y[l],Z)$. Here $Y[l]$ is obtained as follow. Take $l$ copies of $W$. Denote the zero section and infinity section of the $i$-th copy of $W$ by $Z_{0,i}$ and $Z_{\0,i}, 1\leq i\leq l$. We first glue the $l$ copies of $W$ together to get $W[l]$ via identifying $Z_{0,i}$ with $Z_{\0,i+1}$ for $1\leq i\leq l-1$. Then $Y[l]$ is obtained by gluing $Y$ with $W[l]$ by identifying $Z$ with $Z_{\0,1}\in W[l]$. We sometimes denote the $Z$ in $Y$ by $Z_{0,0}$. Then
\[
\mbox{Sing}(Y[l])=
Z\sqcup\coprod_{i=1}^{l-1}Z_{0,i}
=\coprod_{i=1}^l Z_{\0,i}.
\]
The $Y$ in $Y[l]$ is called the root, and the rest $W[l]$ is called rubber.

Therefore there are two types of fixed locus of the induced $\cplane^*$-action on $\M_\Gamma$. A component of the fixed loci consisting of general stable maps with target $Y$ is call a {\em simple} fixed locus. Otherwise, it is called a {\em composite} fixed locus. We denote the simple fixed locus by $\M_\Gamma^{\simple}$. Denote the virtual normal bundle of $\M_\Gamma^\simple$ in $\M_\Gamma$ by $\cN_\Gamma$.

Any element of a composite fixed locus is of the form $f:C'\cup C''\rto Y[l]~(l\geq 1)$, such that the restrictions $f': C'\rto Y$ and $f'': C''\rto W[l]$ agree over the nodes $\{N_1,\cdots,N_\ell\}=C'\cap C''$. Suppose the contact order of $f'$ at $N_i$, i.e. at $Z_{\0,1}(=Y\cap W[l])$ in $Y[l]$, is $\eta_i$ for $1\leq i\leq \ell$. Let $\Gamma'$ be the topological data corresponding to $f'$ and $\Gamma''$ the topological data corresponding to $f''$. (Here $\Gamma''$ denote the genus, absolute markings, degree and contact orders of relative markings relative to both $Z_{0,l}$ in $W[l]$ and contact orders at those nodes $N_i, 1\leq i \leq \ell$.) Any two of $\{\Gamma,\Gamma',\Gamma''\}$ determine the third, and $\Gamma'$ gives us a simple fixed locus $\M_{\Gamma'}^\simple$, and $f'\in\M_{\Gamma'}^\simple$.
%with $\eta_i=\frac{m_i}{r_i}$, where $\f(N_i)\in \D_{(g_i)}$ and $\text{ord}(g_i)=r_i$. This is part of the data of both $\Gamma'$ and $\Gamma''$, i.e. the contact order of $\f''$ at $N_i$ is $\eta_i$ too.

Denote by $\M_{\Gamma''}^\sim$ the moduli space of relative stable maps to the rubber (see Graber--Vakil \cite{GV05}, Maulik--Pandharipande \cite{MP06}). Then the fixed locus $\bF_{\Gamma',\Gamma''}$ corresponding to a given $\Gamma'$ and $\Gamma''$ is canonically isomorphic to the quotient of the moduli space
\[
\M_{\Gamma',\Gamma''}
:=\M_{\Gamma'}^\simple\times_{(Z)^\ell}\M_{\Gamma''}^\sim
\]
by the finite group $\aut(\vec\eta)$, which consists of permutations of $\{1,\ldots,\ell\}$ preserving
\[
\vec\eta={(\eta_1,\ldots, \eta_\ell)}.
\]
Denote the quotient map by $gl$:
\[
gl: \M_{\Gamma',\Gamma''}\rto \F_{\Gamma',\Gamma''}.
\]
Set
\[
[\M_{\Gamma',\Gamma''}]^\vir
:=\Delta^!([\M_{\Gamma'}^\simple]^\vir\times
[\M_{\Gamma',\Gamma''}^\sim]^\vir)
\]
where $\Delta: Z^\ell\rto Z^\ell\times Z^\ell$ is the diagonal map. Then we have \cite{GV05}
\[
[\bF_{\Gamma',\Gamma''}]^\vir=
\frac{1}{|\aut(\vec\eta)|}gl_\ast[\M^{\Gamma',\Gamma''}]^\vir.
\]

The virtual normal bundle of the composite locus $\bF_{\Gamma',\Gamma''}$ consists of two parts. The first part is the virtual normal bundle $\cN_{\Gamma'}$ of $\M_{\Gamma'}^\simple$ in $\M_{\Gamma'}$. The second part is a line bundle $\mc L$ corresponding to the deformation (i.e. smoothing) of the singularity $D_{\0,1}(=Y\cap W[l])$ in $W[l]$. The fiber of this line bundle over a point in the fixed locus is canonically isomorphic to $H^0(Z,N_{Z|Y}\otimes N_{Z_{\0,1}|W[l]})$. The line bundle $N_{Z|Y}\otimes N_{Z_{\0,1}|W[l]}=L\otimes L^*=\mc O_Z$ is trivial {over} $Z$, so its space of global sections is one-dimensional, and we can canonically identify this space of sections with the fiber of the line bundle at a generic point {``$pt$''} of $Z$. Thus we can write the bundle $\mc L$ as a tensor product of bundles pulled back from the two factors separately. The one coming from $\M_{\Gamma'}$ is trivial, since it is globally identified with $H^0(pt,N_{Z|Y}\big|_{pt})$, but it has a nontrivial torus action; we denote this weight by $t$. The line bundle coming from $\M_{\Gamma''}^\sim$ is a nontrivial line bundle, which has fiber $H^0(pt, N_{Z_{\0,1}|W[l]}\big|_{pt})$, but has trivial torus action. We denote its first Chern class by $\Psi_\0$.

The relative virtual localization for relative Gromov--Witten invariants (cf. \cite[Theorem 3.6]{GV05}) is
\begin{align}\label{eq vir-local-formula}
[\M_\Gamma]^\vir=\frac{[\M_\Gamma^\simple]^\vir}{e(\cN_\Gamma)}+
\sum_{\substack{\M_{\Gamma',\Gamma''} \text{ composite}}}
\frac{(\prod_i \eta_i) gl_*[\M_{\Gamma',\Gamma''}]^\vir}
{|\aut(\vec\eta)|e(\cN_{\Gamma'})(t+\Psi_\0)}.
\end{align}

We next describe explicitly the fixed loci of $\M_\Gamma$. We first consider the simple fixed locus $\M_\Gamma^\simple$. An equivalent class of maps $[f: (C,\x,\y)\rto (Y,Z)]$ in the simple fixed locus must have the following form:
\begin{itemize}
\item $(C,\x,\y)$ with absolute markings $\x=(x_1,\ldots,x_m)$, and relative markings $\y=(y_1,\ldots,y_{\ell(\mu)})$ is of the form
       \[
       C=C_0\cup C_1\cup\ldots\cup C_{\ell(\mu)}
       \]
     with $C_0\cap C_i=\{N_i\}$ being a nodal point, called {\em distinguished node}, and $C_i\cap C_j=\varnothing$ for $1\leq i<j\leq \ell(\mu)$. Moreover $x_i\in C_0, 1\leq i\leq m$, and $y_j\in C_j,1\leq j\leq \ell(\mu)$.

\item $C_0$ is a genus $g$ pre-stable curves with $m$ marked points $\x$ and $\ell(\mu)$ marked points $\vec n=(N_1,\cdots,N_{\ell(\mu)})$ corresponding to the $\ell(\mu)$ distinguished nodes.

\item For $1\leq i\leq \ell(\mu)$, each $C_i$ is a Riemann sphere with a marking $y_i$ and a marking $N_i$ corresponding to the $i$-th distinguished node.

\item $f: (C_0,\x\sqcup\vec n)\rto X$ is a genus $g$ degree     $\pi_*(\beta)$ stable maps to $X$, and whose equivalent class belongs to the moduli space $\M_{g,m+\ell(\mu),\pi_*(\beta)}(X)$.

\item For $1\leq i\leq \ell(\mu)$, $f: (C_i,y_i,N_i)\rto Y$ is a total ramified covering of a line in the fiber of $\pi:Y\rto X$ that connects a point in $Z$ and a point in $X$, the degree is determined by the contact order at $Z$. Hence $[f: (C_i,y_i,N_i)\rto Y]$ is in the simple fixed loci of the moduli space
      \[
      \M_{0,1,\mu_i[F],(\mu_i)}(Y,Z),
      \]
    the moduli space of fiber class $\mu_i[F]$ stable maps from Riemannian spheres with exactly one absolute marking mapped to $Y$ and one relative marking mapped to $Z$ with contact order $\mu_i$. For simplicity, we denote this simple fixed locus by $\M_{\mu_i}^\simple$. Denote the disconnected union of $\M_{0,1,\mu_i[F], (\mu_i)}(Y,Z)$ by $\M^\bullet_\mu$, and the disconnected union of simple fixed locus {$\M_{\mu_i}^\simple$} by $\M^{\bullet,\simple}_\mu$.
\end{itemize}

\n Therefore, the simple fixed locus $\M_\Gamma^\simple$ is obtained by gluing stable maps in $\M_{g,m+\ell(\mu),\pi_*(\beta)}(X)$, and stable maps in $\M^{\bullet,\simple}_\mu$ along the absolute marked points of $C_0$ and $C_i$ corresponding to those distinguished nodes $\vec n$. Moreover
\[
gl: \M_\mu^{\simple}\times_{X^{\ell(\mu)}}
\M_{g,m+\ell(\mu),\pi_*\beta}(X)\rto \M_\Gamma^\simple
\]
is a degree $|\aut(\vec\mu)|$ cover. The fiber product is taken with respect to the evaluation maps at the marked points out of the $\ell(\mu)$ distinguished nodes {$\vec n$} of $C$.

The tangent space $T^1_{(C,\x,\y,f)}$ and the obstruction space $T^2_{(C,\x,\y,f)}$ at a moduli point $[(C,\x,\y,f)]\in\M_\Gamma^\simple$ fit in the following long exact sequence of $\cplane^*$-representations:
\begin{align*}
0&\rto \aut(C,\x,\y) \rto \text{Def}(f) \rto T^1_{(C,\x,\y,f)}\rto\\
&\rto \text{Def}(C,\x,\y) \rto \text{Obs}(f) \rto T^2_{(C,\x,\y,f)}\rto 0,
\end{align*}
where
\begin{itemize}
\item $\aut(C,\x,\y)=\text{Ext}^0(\Omega_C(\sum_ix_i+\sum_jy_j) \mc O_C)$ is the space of infinitesimal automorphism of the domain $(C,\x,\y)$. We have
       \[
       \aut(C,\x,\y)=\aut(C_0,\x,\vec n)\oplus
       \bigoplus_{i=1}^{\ell(\mu)}\aut(C_i,y_i,N_i),
       \]

\item $\text{Def}(C,\x,\y)=\text{Ext}^1(\Omega_C(-\sum_i x_i-\sum_jy_j),\mc O_C)$ is the space of infinitesimal deformation of the domain $(C,\x,\y)$. We have a short exact sequence of $\cplane^*$-representations:
       \[
       0 \rto \text{Def}(C_0,\x,\vec n)
         \rto \text{Def}(C,\x,\y)
         \rto \bigoplus_{i=1}^{\ell(\mu)}T_{N_i}C_0
              \otimes T_{N_i}C_i
         \rto 0,
       \]
\item $\text{Def}(f)=H^0(C,f^*(TY(-Z)))$ is the space of infinitesimal deformation of the map $f$, and

\item $\text{Obs}(f)=H^1(C,f^*(TY(-Z)))$ is the space of obstruction to deforming $f$.
\end{itemize}
Let
\[
B_1=\aut(C,\x,\y),\, B_2=\text{Def}(f),\,
B_4=\text{Def}(C,\x,\y),\, B_5= \text{Obs}(f)
\]
and let $B^f_i$ and $B^m_i$ be the fixed and moving parts of $B_i$ with respect to the induced $\cplane^\ast$-action. Then
\[
\frac{1}{e_{\cplane^*}(\cN_\Gamma)}
=
\frac{e_{\cplane^*}(B^m_5) e_{\cplane^*}(B^m_1)}
{e_{\cplane^*}(B^m_2)e_{\cplane^*}(B^m_4)}.
\]

On the other hand we have the following exact sequence
\begin{align}\label{eq exact-sequence-O}
0\rto \mc O_C\rto\bigoplus_{0\leq i\leq \ell(\mu)} \mc O_{C_i} \rto
\bigoplus_{1\leq i\leq \ell(\mu)} \mc O_{C_{\fk F_i}}
\rto 0.
\end{align}
Here $\fk F_i$ is the flag corresponding to the node $N_i$ and $C_0$.

Denote by $E=f^*(TY(-{Z}))$. Then the exact sequence \eqref{eq exact-sequence-O} gives us the following exact sequence
\begin{align*}
0&\rto H^0(C,E)\rto \bigoplus_{0\leq i\leq \ell(\mu)} H^0(C_i,E)
\rto \bigoplus_{1\leq i\leq \ell(\mu)} E_{N_i}\\
&\rto H^1(C,E)\rto \bigoplus_{0\leq i\leq \ell(\mu)} H^1(C_i,E) \rto0.
\end{align*}

\n Then the normal bundle $\cN_\Gamma$ of $\M_\Gamma^\simple$ in $\M_\Gamma$ consists of
\begin{itemize}
\item the normal bundle $\cN_\mu^\bullet=\coprod_i\cN_{\mu_i}$ of $\M_\mu^{\bullet,\simple}$ in $\M_\mu^\bullet$;

\item the contribution from deforming maps along $X$: $H^0(C_0,f^*V)- H^1(C_0,f^*V)$;

\item the contribution from deforming the distinguished nodes: $T_{N_i}C_0\otimes T_{N_i}C_i-V|_{f(N_i)}$, $1\leq i\leq\ell(\mu)$.
\end{itemize}
Denote the last two contribution by $\Theta_\Gamma$, it contains psi-class out of $T_{N_i}C_0$ and Chern class of $V$. Actually, the second item contributes the twisting coming from $V$ for twisted invariants of $X$.

Then we get
\begin{align*}
\int_{[\M_\Gamma^\simple]^\vir}
\frac{ev_\x^*\varpi\cup ev_\y^*\mu}{e_{\cplane^*}(\cN^\vir_\Gamma)}
&=\frac{1}{|\aut(\vec\mu)|}\cdot \\
\sum_{
\substack{
\sigma=(\sigma_{j_1},\ldots,\sigma_{j_{\ell(\mu)}})
\text{ in the}
\\
\text{chosen basis } \Sigma_\star \text{ of } H^*(X)}
}
\Big(
\int_{[\M_\mu^{\bullet,\simple}]^\vir} &
\frac{ev_\y^*\mu\cup ev_{\vec n}^*{\check\sigma}}
{e_{\cplane^*}({\cN_\mu^\bullet})}
\cdot
\int_{[\M_{g,m+\ell(\mu),\pi_*(\beta)}(X)]^{\vir}}
\frac{ev_\x^*\varpi\cup ev_{\vec n}^*{\sigma}}{\Theta_\Gamma}
\Big)\end{align*}
where the sum is taken over all possible $\ell(\mu)$-tuples of the chosen basis $\Sigma_\star$ of $H^*(X)$, and
\[
\int_{[\M_\mu^{\bullet,\simple}]^\vir} \frac{ev_\y^*\mu\cup
ev_{\vec n}^*{\check\sigma}}
{e_{\cplane^*}({\cN_\mu^\bullet})}
=\prod_{i=1}^{\ell(\mu)}
\int_{[\M_{\mu_i}]^\simple}
\frac{ev_{y_i}^*(\gamma_i)\cup ev_{N_i}^*({\check\sigma_{j_i}})}
{e_{\cplane^*}(\cN_{\mu_i})}
\]
is a product of fiber class (1+1)-point relative Gromov--Witten invariants of $(Y,Z)$. Such invariants were computed by Hu--Li--Ruan \cite[Section 7]{HLR08} and depend only on $\vec\mu$.

The invariant
\[
\int_{[\M_{g,\pi_*\beta,m+\ell(\mu)}(X)]^{\vir}}
\frac{ev_\x^*\varpi\cup ev_{\vec n}^*{\sigma}}{\Theta_\Gamma}
\]
is a Hodge integral over the twisted Gromov--Witten invariant of $X$ with twisting coming from the bundle $V$, which by the Quantum Riemann--Roch of Coates--Givental \cite{CG07}, is determined by the Gromov--Witten theory of $X$ and the total Chern class of $V\rto X$.

We next consider composite fixed locus. Recall that a composite fixed locus is of the form
\[
\bF_{\Gamma',\Gamma''}
=gl(\M_{\Gamma'}^\simple\times_{Z^{\ell(\vec\eta)}}
 \M_{\Gamma''}^\sim).
\]
with contribution being
\[
\frac{\prod_i \eta_i}{\aut(\vec\eta)}\cdot
\frac{gl_*\Delta^!([\M_{\Gamma'}^\simple\times
\M_{\Gamma''}^\sim]^{\vir})}{e(\cN_{\Gamma'})(t+\Psi_\0)}.
\]
By the analysis for simple fixed locus, the contributions from $\M_{\Gamma'}^\simple$ are determined by Gromov--Witten invariants of $X$ and total Chern class of the bundle $V$. Therefore, the contributions of $\bF_{\Gamma',\Gamma''}$ reduce to rubber invariants corresponding to $\M_{\Gamma''}^\sim$, which are rubber invariants with $\Psi^k_\0$-integrals of $(W,Z_0\sqcup Z_\0)$. Let $\beta'$ and $\beta''$ denote the homology classes of $\Gamma'$ and $\Gamma''$ respectively, then
\[\begin{split}
\beta  =\pi_*(\beta)+d[F],\qq
\beta' =\pi_*(\beta')+d'[F],\qq
\beta''=\tilde\pi_*(\beta'')+d[\tilde F]
\end{split}\]
with $\beta=\beta'+\tilde\pi_*(\beta'')$, hence $\tilde\pi_*(\beta'')=\pi_*(\beta)-\pi_*(\beta')+(d-d')[F]$.

We denote the rubber invariants of $(W,Z_0\sqcup Z_\0)$ by
\begin{align}\label{eq rubber-invariants}
\bl\mu\bb\varpi\cdot
\Psi_\0^k\bb\nu\br^{\sim}_{g,\beta}.
\end{align}
Note that here the $\varpi$ may be just a part of the initial one in \eqref{eq rel-inv-considered}, $g$ may less than the $g$ in \eqref{eq rel-inv-considered}, and $\beta$ is just the previous $\beta''$. There maybe disconnected rubber invariants in the contributions of $\bF_{\Gamma',\Gamma''}$.

\subsection{Rubber invariants}\label{sec ruber-invariant}

Via a boundary strata analysis and rigidification in \cite[Section 1.5]{MP06}, rubber invariants of $(W,Z_0\sqcup Z_\0)$ in \eqref{eq rubber-invariants} are determined by fiber class relative Gromov--Witten invariants of $(W,Z_0\sqcup Z_\0)$ and {\em distinguished type II invariants} of $(W,Z_0\sqcup Z_\0)$, i.e. invariants of the form:
\[
\bl \mu\bb\tau_0([Z_0]\cdot\delta)\cdot\varpi\bb\nu\br_{g,\beta}
\]
with $\delta\in H^{>0}(Z)$, where we omit the superscript $(W,Z_0\sqcup Z_\0)$. Denote this invariant by $R$. There is a partial order $\stackrel{\circ}{<}$ (cf. \cite[Section 1.3]{MP06}) over all distinguished type II invariants. Distinguished type II invariants are determined by induction via three relations in \cite[Section 1]{MP06}. The three relations express the invariant $R$ as a summation of products of twisted Gromov--Witten invariants of $Z$ with twisting coming from $L^\ast$, and distinguished type II invariants $R'$ that are lower than $R$ with respect to the partial order $\stackrel{\circ}{<}$. We have (cf. \cite{MP06})
\begin{eqnarray}\label{eq induction-type-II}
&&
C_R\bl \mu\bb\tau_0([Z_0]\cdot\delta)\cdot\varpi\bb\nu\br_{g,\beta}\\
&=&\sum_{\substack{||\varpi'||\leq ||\varpi||,
                  m\geq 0\\
                  \deg\mu'\geq \deg\mu\\
                  \deg\nu'\geq \deg\nu+1}}
                  C_{\mu',\varpi',\nu'}\cdot \bl\mu'\bb \tau_0([Z_0]\cdot
                  H\cdot c_1(L)^m)\cdot\varpi' \bb \nu' \br_{g,\beta}\nonumber\\
&+&\sum_{\substack{||\varpi'||\leq ||\varpi||,
                  m\geq 0\\
                  \deg\mu'\geq \deg\mu+1\\
                  \deg\nu'\geq \deg\nu}}
                  C_{\mu',\varpi',\nu'}\bl\mu'\bb \tau_0([Z_0]\cdot H\cdot
                  c_1(L)^m)\cdot\varpi' \bb \nu' \br_{g,\beta}\nonumber\\
&-&\sum_{\substack{||\varpi'||\leq ||\varpi||,
                  m\geq 0\\
                  \deg\mu'\geq \deg\mu\\
                  \deg\nu'\geq \deg\nu}}
                  C_{\mu',\varpi',\nu'}\bl\mu'\bb \tau_0([Z_0]\cdot \delta
                  \cdot c_1(L)^{m+1})\cdot\varpi' \bb \nu' \br_{g,\beta}\nonumber\\
&-&\sum_{\substack{||\varpi''||\leq ||\varpi||,
                  m\geq 0\\
                  \deg\mu''\geq \deg\mu+1\\
                  \deg\nu''\geq \deg\nu}}
                  C_{\mu'',\varpi'',\nu''}\bl\mu''\bb \tau_0([Z_0]\cdot
                  H\cdot c_1(L)^m)\cdot\varpi'' \bb \nu''\br_{g,\beta}\nonumber\\
&-&\sum_{\substack{R'_{g,\beta}:\text{ distinguished type II},\,
R'\stackrel{\circ}{<} R}}C_{R,R'} R' +\cdots, \nonumber
\end{eqnarray}
where

(1) $C_R$ is a constant depending on $\vec\nu$;

(2) all $C_{\ast,\ast}$ and $C_{\ast,\ast,\ast}$ are fiber invariants of $(W,Z_0)$ and $(W,Z_0\sqcup Z_\0)$;

(3) $H$ is a degree {2} class in $H^2(Z)$ such that $H\cdot \tilde\pi_*({\beta})\neq 0$;

(4) $\varpi'$ and $\omega''$ are parts of $\varpi$;

(5) $\mu',\nu',\mu''$ and $\nu''$ are new weighted partitions of $\beta\cdot[Z_0]$,

(6) $R'$ denote those distinguished type II invariants of $(W,Z_0\sqcup Z_\0)$ that have genus $g$ and homology class $\beta$ but are lower than $R$ with respect to the partial order $\stackrel{\circ}{<}$.

(7) We next focus on ``$\cdots$''. In the original relations
(cf. \cite[Relation 1, Relation 2 and Relation 2']{MP06}), ``$\cdots$'' stands for distinguished type II invariants of $(W,Z_0\sqcup Z_\0)$ and relative Gromov--Witten invariants of $(W,Z_0)$ with homology class $\beta'$ such that $\beta-\beta'$ is effective or $\beta=\beta'$ but genus $g'<g$. However, by virtual localization the latter ones are determined by Hodge integrals in twisted Gromov--Witten invariants of $Z$ with twisting coming from $\mc O_Z(-1)=L^\ast$, the normal bundle of $Z_\0$ in $W$, and rubber invariants of $(W,Z_0\sqcup Z_\0)$ with $\beta-\beta'$ is effective or $\beta=\beta'$ but genus $g'<g$. The latter one are determined by distinguished type II invariants of $(W,Z_0\sqcup Z_\0)$ that are lower than $R$. Therefore, in the formula above we use ``$\cdots$'' to stand for combination of Hodge integrals in twisted Gromov--Witten invariants of $Z$ with twisting coming from $L^\ast$,and distinguished type II invariants of $(W,Z_0\sqcup Z_\0)$ with homology class $\beta'$ such that $\beta-\beta'$ is effective or $\beta=\beta'$ but genus $g'<g$.

\section{Proof of Theorem \ref{thm main}}

Now consider $V_i\rto X$, $i=1,2$ that satisfy the conditions in Theorem \ref{thm main}. First of all, the fiber class relative Gromov--Witten invariants of $(Y_i,Z_i)$ are identified via \eqref{eq main-thm} by the analysis in Section \ref{sec fiber-class-inv}.

We next consider non fiber class relative Gromov--Witten invariants. By virtual localization, the contributions from simple fixed loci for the invariants in \eqref{eq main-thm} of both $(Y_i,Z_i)$ are identified via $\F_Y,\F_Z$ and $\Psi$. We next show that we could identify the contributions from composite fixed loci. We only need to show that rubber invariants of $(W_i,Z_{i,0}\sqcup Z_{i,\0})$, and fiber class relative Gromov--Witten invariants of $(W_i,Z_{i,0}\sqcup Z_{i,\0})$ and $(W_i,Z_{i,0})$ are identified via $\F_Z$ and $\F_Z(z_1)=z_2$ by the analysis in Section \ref{sec ruber-invariant}.

All fiber class relative Gromov--Witten invariants of $(W_i, Z_{i,0})$ and $(W_i, Z_{i,0}\sqcup Z_{i,\0})$ are identified similar as the analysis in Section \ref{sec fiber-class-inv} or \cite[Section 1.2]{MP06} as follow. Note that we identified $w_1$ with $w_2$. Then by the analysis in Section \ref{sec fiber-class-inv} or \cite[Section 1.2]{MP06}, the fiber class relative invariants of $(W_i, Z_{i,0})$ and $(W_i, Z_{i,0}\sqcup Z_{i,\0})$ are identified once we identify the integration over $Z_i$ of cohomology classes in $H^*(Z_i)$. In fact, this is identified by $\F_Z:H^*(Z_1)\cong H^*(Z_2)$ and $\F_Z(z_1)=z_2$. Hence the fiber class relative Gromov--Witten invariants of {$(W_i, Z_{i,0})$ and} $(W_i, Z_{i,0}\sqcup Z_{i,\0})$ are identified via $\F_Z$.

Now we consider rubber invariants of $(W_i,Z_{i,0}\sqcup Z_{i,\0})$. The boundary strata analysis and rigidification procedure are identified via $\F_Z$ and $\Psi$ automatically from the presentation in \cite[Section 1.5]{MP06}. So we only need to show that we can identify distinguished type II invariants of $(W_i,Z_{i,0}\sqcup Z_{i,\0})$. First of all, via $\F_W$ (determined by $\F_Z$) we could get a one-to-one correspondence between distinguished type II invariants of $(W_i,Z_{i,0}\sqcup Z_{i,\0})$ for $i=1,2$. We use $R$ to denote the invariant for $(W_1,Z_{1,0}\sqcup Z_{1,\0})$ and $\F(R)$ for $(W_2,Z_{2,0}\sqcup Z_{2,\0})$. Via the induction \eqref{eq induction-type-II}, one can see that after assuming that all invariants $R'\stackrel{\circ}{<} R$ are identified, once we fix the $H\in H^*(Z_1)$ for $(W_1,Z_{1,0}\sqcup Z_{1,\0})$ in \eqref{eq induction-type-II} and use the corresponding $\F(H)$ for $(W_2,Z_{2,0}\sqcup Z_{2,\0})$ in \eqref{eq induction-type-II}, the invariant $R$ of $(W_1,Z_{1,0}\sqcup Z_{1,\0})$ is identified with the invariant $\F(R)$ of $(W_2,Z_{2,0}\sqcup Z_{2,\0})$ if the Hodge integrals in twisted Gromov--Witten invariants of $Z_i$ with twisting coming from $\mc O_{Z_i}(-1)$ are identified. However, via the Quantum Riemann--Roch \cite{CG07} the twisted invariants of $Z_i$ are determined by invariants of $Z_i$ and $c_1(\mc O_{Z_i}(-1))$. Since $\F_Z(c_1(\mc O_{Z_1}(-1)))=c_1(\mc O_{Z_2}(-1))$ that is $\F_Z(-z_1)=-z_2$ by the definition of $\F_Z$, the result for absolute Gromov--Witten invariants of projective bundles of
%Fan--Lee \cite[Theorem A]{FL16} and
Fan \cite[Theorem A]{Fan17} applies. This finished the proof of Theorem \ref{thm main}.

\subsection{Gromov--Witten theory of blow-up along complete intersection}
By Theorem \ref{thm main}, we could give an alternative proof of \cite[Conjecture 2]{MP06} besides the proofs given by Fan in \cite{Fan17}, and by Chen, Wang and the author in \cite{DCW17}.

\begin{conj}[Maulik--Pandharipande]
Let $Z\subset X$ be a complete intersection of $l$ divisors $W_1,\ldots,W_l$ of $X$, $\tilde X$ be the blow-up of $X$ along $Z$. The descendent absolute Gromov--Witten invariants of $\tilde X$ are determined by the restriction map $H^*(X)\rto H^*(Z)$ and descendent absolute Gromov--Witten invariants of $X$ and $Z$.
\end{conj}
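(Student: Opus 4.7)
The plan is to reduce the conjecture to Theorem \ref{thm main} via two applications of the deformation to the normal cone and the degeneration formula for relative Gromov--Witten invariants. Since $Z\subset X$ is the complete intersection of $W_1,\ldots,W_l$, its normal bundle satisfies $N_{Z/X}=V|_Z$ for $V:=\bigoplus_{i=1}^l\mc O_X(W_i)\rto X$, so the total Chern class $c(N_{Z/X})=\prod_i(1+[W_i]|_Z)$ is determined by $[W_i]\in H^*(X)$ and the restriction map $H^*(X)\rto H^*(Z)$.

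First I would deform $X$ to the normal cone of $Z$, producing a family whose general fibre is $X$ and whose special fibre is $\tilde X\cup_E Y_Z$, with $E=\P(N_{Z/X})$ and $Y_Z:=\P(N_{Z/X}\oplus\mc O_Z)$. The Jun Li / Li--Ruan degeneration formula expresses every descendent absolute Gromov--Witten invariant of $X$ as a sum of pairings of relative invariants of $(\tilde X,E)$ against relative invariants of $(Y_Z,E)$ indexed by weighted partitions along $E$. By the proof of Theorem \ref{thm main} (cf.\ Sections \ref{sec fiber-class-inv} and \ref{sec ruber-invariant}), the relative invariants of $(Y_Z,E)$ are determined by $c(N_{Z/X})$ together with the descendent absolute Gromov--Witten invariants of $Z$; by the observation above they are therefore determined by invariants of $Z$ and the restriction map.

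Next I would invert this degeneration formula following the strategy of \cite{MP06}. Order weighted partitions along $E$ by the partial ordering used there; for each fixed outer data the system is triangular, isolating one top relative invariant of $(\tilde X,E)$ plus contributions already controlled by descendent invariants of $X$, by the relative invariants of $(Y_Z,E)$ treated above, and by relative invariants of $(\tilde X,E)$ of strictly lower partition type. An induction on this ordering then expresses every relative invariant of $(\tilde X,E)$ in terms of descendent absolute invariants of $X$, descendent absolute invariants of $Z$, and the restriction map.

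Finally, to recover the descendent absolute Gromov--Witten invariants of $\tilde X$, I would deform $\tilde X$ to the normal cone of $E\subset\tilde X$. Since $N_{E/\tilde X}=\mc O_E(-1)$, the special fibre is $\tilde X\cup_E \tilde W$ with $\tilde W=\P(\mc O_E(-1)\oplus\mc O_E)$, and the degeneration formula writes absolute invariants of $\tilde X$ as pairings of the already determined relative invariants of $(\tilde X,E)$ against relative invariants of $(\tilde W,E_\0)$. Theorem \ref{thm main} identifies these last invariants with data computable from $c(\mc O_E(-1))$ and the absolute invariants of $E=\P(V|_Z)$, and the Fan--Lee / Fan uniqueness theorem \cite{FL16,Fan17} reduces invariants of $E$ to $c(V|_Z)$ and invariants of $Z$. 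Chaining the three reductions then finishes the proof. The hardest step will be the third paragraph above: one must verify triangularity of the degeneration formula with respect to the chosen partition ordering and confirm that the ``lower'' terms appearing at each induction stage are already covered, exactly in the spirit of \cite[\S 1--2]{MP06}.
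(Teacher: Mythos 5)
Your overall strategy (deformation to the normal cone of $Z\subset X$, the degeneration formula, and the uniqueness results of Fan--Lee, Fan, and Theorem~\ref{thm main} for the bundle pieces) is the same as the one the paper sketches, but you lay out the logical chain much more explicitly than the paper does and you separate out two ingredients that the paper collapses into a single, rather terse sentence. The paper's proof reads ``By the degeneration formula, GW invariants of $\tilde X$ are determined by relative GW invariants of $(X,\P(N))$ and $(\P(N\oplus\mc O_Z),\P(N))$''; as written this does not parse ($\P(N)$ is not a divisor of $X$, and the degeneration of $X$ to the normal cone of $Z$ outputs absolute invariants of $X$, not of $\tilde X$). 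You correctly identify that what is actually going on is: (i) the degeneration $X\rightsquigarrow \tilde X\cup_E \P(N\oplus\mc O_Z)$, whose degeneration formula must then be \emph{inverted} à la Maulik--Pandharipande to solve for relative invariants of $(\tilde X,E)$; and (ii) a second step converting relative $(\tilde X,E)$ invariants back to absolute invariants of $\tilde X$ (your third paragraph, via the normal-cone degeneration of $\tilde X$ along $E$, or equivalently via the MP absolute/relative equivalence applied to $(\tilde X,E)$). The paper instead cites Maulik--Pandharipande's relative-to-absolute correspondence applied to ``$(X,\P(N))$'' (almost certainly meant as $(\tilde X,\P(N))$), but applying that result literally would be circular since it presupposes the absolute theory of $\tilde X$; what it is really invoking is the same inversion you describe. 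So your write-up is a cleaner and, frankly, more careful organization of the intended argument than the paper's own paragraph, and your identification of the triangularity/inversion step as the nontrivial part is exactly right — the paper hides that step behind a citation.

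One small point on credit: you cite ``the proof of Theorem~\ref{thm main}'' to conclude that relative invariants of $(Y_Z,E)$ are determined by $c(N)$ and the GW theory of $Z$; that is correct and matches the paper, which likewise uses the proof of Theorem~\ref{thm main} (not its uniqueness statement alone) at this point, and again uses the proof for the $\P^1$-bundle pair in your third step. Your observation that $N_{Z/X}=\big(\bigoplus_i\mc O_X(W_i)\big)\big|_Z$ and hence $c(N_{Z/X})$ is visible from the restriction map is also the right way to discharge that dependence; the paper just asserts ``$c(N)$ is determined by $H^*(X)\to H^*(Z)$.''
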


\begin{proof}
By the degeneration formula (cf. \cite{LR01,Li02,IP04}), Gromov--Witten invariants of $\tilde X$ are determined by relative Gromov--Witten invariants of $(X,\P(N))$, and $(\P(N\oplus\mc O_Z),\P(N))$, where $N$ is the normal bundle of $Z$ in $X$. Maulik--Pandharipande proved that relative Gromov--Witten invariants of $(X,\P(N))$ are determined by absolute invariants of $X$ and $\P(N)$. By Theorem \ref{thm main}, relative Gromov--Witten invariants of $(\P(N\oplus\mc O_Z),\P(N))$ are determined by Gromov--Witten invariants of $Z$ and $c(N)$. Meanwhile $c(N)$ is determined by $H^*(X)\rto H^*(Z)$. On the other hand, absolute Gromov--Witten invariants of $\P(N)$ is determined by Gromov--Witten invariants of $Z$ and $c(N)$ too as proved by Fan \cite{Fan17}. This proves the conjecture.
\end{proof}

Fan \cite{Fan17} proved this conjecture by first reducing relative Gromov--Witten invariants of $(\P(N\oplus\mc O_Z),\P(N))$ to absolute Gromov--Witten invariants of $\P(N\oplus\mc O_Z)$ and $\P(N)$ via the absolute/relative correspondence result of Maulik--Pandharipande \cite{MP06} and Hu--Li--Ruan\cite{HLR08}, then applying results on the uniqueness of absolute Gromov--Witten invariants of projective bundles proved in \cite{FL16, Fan17}.

On the other hand, for the case considered in the conjecture, $N$ splits into a direct sum of line bundles. By Maulik--Pandharipande's result, the relative invariants of $(X,\P(N))$ and $(\P(N\oplus\mc O_Z),\P(N))$ are determined by absolute Gromov--Witten invariants of $X,\P(N),\P(N\oplus\mc O_Z)$. We could then apply virtual localization \cite{GP99} with respect to a $(\cplane^*)^l$-action on $\P(N)$ and $\P(N\oplus\mc O_Z)$ to show that the absolute invariants of $\P(N)$ and $\P(N\oplus\mc O_Z)$ are determined by twisted invariants of $Z$ with twisting coming from $N$, which by Quantum Riemann--Roch \cite{CG07} are determined by Gromov--Witten invariants of $Z$ and $c(N)$. This procedure was done previously by Chen, Wang and the author in \cite{DCW17} for complete intersection in the more general orbifold case.

\subsection*{Acknowledgements}
The author thanks the anonymous referees for valuable suggestions. This work was supported by National Natural Science Foundation of China (Grant No. 11501393).

\end{document}